\newtheorem{theorem}{Theorem}[section]
\newtheorem{lemma}[theorem]{Lemma}
\theoremstyle{definition}
\numberwithin{equation}{section}
\renewcommand{\d}{\mathrm{d}} 
\newcommand{\dt}{\d t}
\newcommand{\epsi}{\varepsilon}
\newcommand{\Rz}{{\mathbb R}}
\newcommand{\disp}{\displaystyle}
\newcommand{\calF}{\mathcal F}
 \newcommand{\calG}{\mathcal G}
 \newcommand{\calE}{\mathcal E}
\newcommand{\e}{{\rm e}}
\newcommand{\lan}{\langle}
\newcommand{\ran}{\rangle}
\begin{document}

\title[Penalization via global  functionals]{Penalization via global
  functionals of \\ optimal-control problems for dissipative evolution}

\author{Lorenzo Portinale}
\address[Lorenzo Portinale]{IST Austria,
Am Campus 1,
3400 Klosterneuburg,
Austria.}
\email{lorenzo.portinale@ist.at}

\author{Ulisse Stefanelli}
\address[Ulisse Stefanelli]{Faculty of Mathematics, University of Vienna, 
Oskar-Morgenstern-Platz 1, 1090 Wien, Austria  and  Istituto di Matematica
Applicata e Tecnologie Informatiche \textit{{E. Magenes}}, v. Ferrata 1, 27100
Pavia, Italy.}
\email{ulisse.stefanelli@univie.ac.at}
\urladdr{http://www.mat.univie.ac.at/$\sim$stefanelli}

\subjclass[2010]{49J20, 35K55, 58E30}
\keywords{Optimal control, dissipative evolution, penalization, global
variational method.}

\begin{abstract}
We consider an optimal control problem for an abstract nonlinear
dissipative 
evolution equation. The differential constraint is penalized by
augmenting the target functional by a nonnegative global-in-time
functional which is null-minimized iff the evolution equation is
satisfied. Different variational settings are presented, leading to
the convergence of
the penalization method for gradient flows, noncyclic and semimonotone flows, doubly
nonlinear evolutions, and GENERIC systems. 
\end{abstract}

\maketitle

\section{Introduction}

We are concerned with the abstract optimal control problem
\begin{equation}
  \label{eq:1}
  \min\{F(u,y) \ : \ y \in S(u)\}.
\end{equation}
Here, $u:[0,T]\to H $ stands for a time-dependent admissible control, $H$ is a
Hilbert space, and $y:[0,T]\to H$  belongs to the set $S(u)$ of
 a nonlinear evolution equation  with datum $u$  to be
specified below. The nonnegative {\it target}
functional $F$ is defined on the trajectories $u$ and $y$.

Relation
$y  \in   S(u)$  corresponds  to different models of
dissipative evolution. In particular, we will  consider
the case of $u$-forced
\begin{align*}
  &\text{Gradient flows:}&&y'+\partial \phi(y) = u,\\
&\text{Monotone and pseudomonotone flows:}&&y'+A(y) = u,\\
&\text{Generalized gradient flows:}&&\partial_{y'} \psi(y,y')+\partial
  \phi(y) = u,\\
&\text{GENERIC flows:}&&y' =L(y) \,DE(y) - K(y) (\partial \phi(y) -u).
\end{align*}
The reader is referred to the following sections for all necessary details.
 In all of these cases, the abstract relation
$y \in  S(u)$  stands for the variational formulation of  a nonlinear partial differential
problem of parabolic type, possibly being singular or degenerate. 

The differential constraint $y\in S(u)$ will be equivalently reformulated as
$$y  \in   S(u) \ \ \Leftrightarrow \ \  G(u,y)=0,$$
where  the  {\it constraining functional} $G$ is a nonnegative functional on entire trajectories.
This characterization is not new. In the specific case of a gradient flow
$y'+\partial \phi(y) = u$, where $\partial \phi$ stands for the
subdifferential of the convex energy $\phi:H \to (-\infty,\infty]$, two
possible choices of the constraint functional $G$ are given by the
{\it Brezis-Ekeland-Nayroles} functional
$$G_{\rm BEN}(u,y) = \int_0^T \Big(\phi(y)+\phi^*(u-y') -(u,y) \Big) \dt + \frac12\|
y(T)\|^2 - \frac12\| y_0\|^2 $$
and the {\it De Giorgi} functional
$$G_{\rm DG}(u,y) = \int_0^T \left(\frac12 \| y'\|^2  +\frac12 \|\partial
\phi(y)-u\|^2 - (u,y') \right)\dt +
\phi(y(T)) - \phi(y_0).$$
Here, $(\cdot,\cdot)$ and $\| \cdot \|$ denote the scalar product and the norm
in $H$, respectively. 
The trajectory $y$ is forced to assume the initial value $y(0)=y_0$ by
defining $G(u,y)=\infty$ otherwise. 

The focus of this note is on the penalization of problem \eqref{eq:1}
by
\begin{equation}
  \label{eq:2}
  \min E_\epsi(u,y) \quad \text{for} \ \ E_\epsi(u,y):= F(u,y) + \frac{1}{\epsi}G(u,y).
\end{equation}
This  corresponds to approximate  the constrained
minimization of problem \eqref{eq:1} by means of  a family of  unconstrained
minimizations. 

This approach is indeed classical and has to be traced
back to {\sc Lions}  \cite{Lions68},  who proposed to penalize the
constraint by the
residual of the equation.  This has already been investigated,   
both in the stationary and the evolutive case, see
\cite{Bergounioux92,Bergounioux94,Bergounioux98,Gariboldi09,Gunzburger00,Mophou11}
among many others.  We follow this line by penalizing the
minimization by the De Giorgi
functional $G_{\rm DG}$, which corresponds to the residual by
nonetheless exploiting the variational structure
of the equation in order to simplify the energy. On the
other hand,  penalization in coordination with
 the Brezis-Ekeland-Nayroles functional $G_{\rm BEN}$ is not
directly related with residual minimization and, to our knowledge,  has not been studied
yet. Note that
the actual choice
of the constraining functional $G$  strongly
influences the properties of the
problem, so that the considering different options for $G$ is a
sensible issue.

In the case of the Brezis-Ekeland-Nayroles functional
$G_{\rm BEN}$, problem \eqref{eq:2} turns out to be a {\it separately convex}
minimization problem. This allows for the implementation of an alternate
minimization procedure, where $E_\epsi$ is alternatively minimized in
the state and the control until convergence.

The case of the De Giorgi functional $G_{\rm DG} $ bears its interest
in the fact that it is not restricted to convex functionals $\phi$. In
fact, $G_{\rm DG} $ is suited for nonconvex potentials as well and it can be easily modified to accommodate additional nonlinear features, such as nonlinear
dissipative or conservative terms (see Section \ref{sec:DG} below). 

Our aim is that of checking the solvability of the penalized
minimization problem
\eqref{eq:2} and the convergence of its minimizers to minimizers of
the constrained problem \eqref{eq:1} as $\epsi \to
0$. This will be achieved by proving the $\Gamma$-convergence of the
penalized functional
$E_\epsi $ to the limit $E_0$ defined by 
$$E_0(u,y)= F(u,y) \ \ \text{if} \ \ G(u,y)=0 \ \ \text{and} \ \
E_0(u,y)=\infty \ \ \text{otherwise}$$
under different variational settings, corresponding to the
above-mentioned different evolution  models. 

The paper is organized as follows.
The abstract functional setup is detailed in Section
\ref{sec:abstract}. Then, the application of the abstract theory to
the case of the Brezis-Ekeland-Nayroles variational principle for
gradient, noncyclic and semimonotone flows, and doubly nonlinear flows is
addressed in Section \ref{sec:BEN}. Eventually, Section \ref{sec:DG}
deals with the applications of De Giorgi principle in the context of gradient,
doubly nonlinear, and GENERIC flows.

\section{Abstract setup}\label{sec:abstract}

Let us start by specifying some notation. In the following, $H$ stands
for a real separable Hilbert space with scalar product $(\cdot,\cdot)$
and norm $\| \cdot \|$. The norm in the general Banach space $E$ will
be denoted by $\| \cdot \|_E$. Given the reference time $T>0$, we make
use of the standard Bochner spaces $L^p(0,T;E)$, $W^{1,p}(0,T;E)$,
$C([0,T];E)$ and so on.

A caveat on notation: we will use the same symbol $c$ to indicate positive
universal constants, possibly depending on data, and changing from
line to line.

Given a topological space $(X,\tau)$, we recall that a sequence of
functionals $\calE_\epsi : (X,\tau)\to [0,\infty]$ is said to
$\Gamma$-converge \cite{DeGiorgi79}
to the limit $\calE_0 : (X,\tau)\to [0,\infty]$ if $\calE_0(x) \leq
\liminf_{\epsi \to 0} \calE_\epsi(x_\epsi)$ for any $x_\epsi \to x$
and for all $\hat x\in X$ there exists a sequence $\hat x_\epsi\to
\hat x$ such that
$\calE_\epsi(x_\epsi)\to \calE_0(\hat x)$. The reader is referred to
{\sc Dal Maso} \cite{DalMaso93} for  a thorough presentation.  

We record here the following elementary lemma, which serves as basis
for proving convergence of the minimizers of problem \eqref{eq:2}
throughout.

\begin{lemma}[$\Gamma$-convergence]\label{lem} Let $(X,\tau)$ be a
   sequential  topological
  space and the functionals $\calF,\, \calG: (X,\tau) \to [0,\infty]$ be lower
  semicontinuous. Assume $\calE_\epsi : = \calF +\epsi^{-1}
  \calG$  to be  proper ($\calE_\epsi \not \equiv \infty$) and
  equicoercive for $\epsi>0$ small enough, namely that there exists
  $\epsi_0>0$, $\lambda>0$, and a compact $K  \subset X $
  such that $ \{x\in X \ : \ \calE_\epsi(x) <\lambda \} \subset K$ for all $\epsi <\epsi_0$.
Then, 
  \begin{itemize}
  \item[1.] $\calE_\epsi \stackrel{\Gamma}{\to} \calE_0 $ where
    $\calE_0(x) := \calF(x) $ if $ \calG(x)=0$ and $\calE_0=\infty$
    otherwise;
\item[2.] $\min \calE_\epsi$ can be solved for all
  $\epsi<\epsi_0$. Any sequence $x_\epsi$  of quasiminimizers, namely 
  $\liminf_{\epsi \to 0} (\calE_\epsi(x_\epsi) {-} \inf\calE_\epsi)=0$,
  admits a subsequence converging
  to a minimizer of~$\calE_0$;
\item[3.]  If  $ \calE_0$ admits a
  unique minimizer  $x_0$, any sequence of quasiminimizers of
  $\calE_\epsi$ converges to~$x_0$.
  \end{itemize}
\end{lemma}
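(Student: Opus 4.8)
The plan is to verify the three items in order, since each builds on the previous one. For item 1, the $\Gamma$-$\liminf$ inequality: given $x_\epsi \to x$, I would distinguish two cases. If $\calG(x) = 0$, then I need $\calF(x) \le \liminf_\epsi \calE_\epsi(x_\epsi)$, which follows immediately from $\calE_\epsi(x_\epsi) = \calF(x_\epsi) + \epsi^{-1}\calG(x_\epsi) \ge \calF(x_\epsi)$ together with lower semicontinuity of $\calF$. If $\calG(x) > 0$, I need $\liminf_\epsi \calE_\epsi(x_\epsi) = \infty$; by lower semicontinuity of $\calG$ there is $\delta > 0$ with $\calG(x_\epsi) \ge \delta$ for $\epsi$ small, hence $\calE_\epsi(x_\epsi) \ge \epsi^{-1}\delta \to \infty$ (using $\calF \ge 0$). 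For the $\Gamma$-$\limsup$ (recovery sequence): given $\hat x$, if $\calG(\hat x) > 0$ or $\calF(\hat x) = \infty$ then $\calE_0(\hat x) = \infty$ and the constant sequence $\hat x_\epsi = \hat x$ works since $\calE_\epsi(\hat x) \to \infty$ (or is identically $\infty$) in the first case, and $\calE_\epsi(\hat x) \ge \calF(\hat x) = \infty$ in the second; otherwise $\calG(\hat x) = 0$ and $\calF(\hat x) < \infty$, and again the constant sequence gives $\calE_\epsi(\hat x) = \calF(\hat x) = \calE_0(\hat x)$ for every $\epsi$.

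For item 2, solvability of $\min \calE_\epsi$ for $\epsi < \epsi_0$: I would use the direct method. The functional $\calE_\epsi$ is proper and, being a sum of two lower semicontinuous functionals with the second scaled by a positive constant, it is itself lower semicontinuous on $(X,\tau)$. A minimizing sequence eventually satisfies $\calE_\epsi < \lambda$ (since $\inf \calE_\epsi < \lambda$ — this needs a brief check that $\inf \calE_\epsi < \lambda$, which should be arranged, perhaps by shrinking $\lambda$ or noting $\inf \calE_\epsi \le \inf \calE_{\epsi_0} < \lambda$ since $\epsi \mapsto \calE_\epsi$ is monotone; in fact equicoercivity as stated presumes the sublevel is nonempty-ish, but one should check $\inf\calE_\epsi<\lambda$ genuinely holds, perhaps by assuming $\lambda$ large without loss of generality), hence lies in the compact set $K$. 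Extracting a convergent subnet — or subsequence, using that $(X,\tau)$ is sequential so that compactness plus lower semicontinuity interact correctly — and applying lower semicontinuity yields a minimizer. For the convergence of quasiminimizers, I would combine the equicoercivity (all $x_\epsi$ with $\calE_\epsi(x_\epsi)$ close to $\inf\calE_\epsi < \lambda$ lie in $K$ for $\epsi$ small) with the fundamental theorem of $\Gamma$-convergence: from item 1 and equicoercivity, cluster points of quasiminimizers are minimizers of $\calE_0$. Concretely: extract a convergent subsequence $x_{\epsi_k} \to x_*$ in $K$; the $\liminf$ inequality gives $\calE_0(x_*) \le \liminf_k \calE_{\epsi_k}(x_{\epsi_k}) = \liminf_k \inf \calE_{\epsi_k}$; and using a recovery sequence for any competitor $\hat x$ (which exists by item 1), $\liminf_k \inf \calE_{\epsi_k} \le \liminf_k \calE_{\epsi_k}(\hat x_{\epsi_k}) = \calE_0(\hat x)$, so $\calE_0(x_*) \le \inf \calE_0$, i.e. $x_*$ minimizes $\calE_0$. (One should note $\min\calE_0$ is attained, e.g. because any minimizer of $\calE_0$ is a limit of quasiminimizers by the recovery construction, or directly since $\calE_0$ is lower semicontinuous and, on the sublevel $\{\calE_0 < \lambda\} \subset \overline{K} = K$, coercive.)

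For item 3, uniqueness: if $x_0$ is the unique minimizer of $\calE_0$, then item 2 says every sequence of quasiminimizers has a subsequence converging to $x_0$. A standard subsequence argument upgrades this to convergence of the full sequence: if $x_\epsi \not\to x_0$, there is a neighborhood $U$ of $x_0$ and a subsequence staying outside $U$; but that subsequence is itself a sequence of quasiminimizers, so it has a further subsequence converging to $x_0 \in U$, a contradiction. This uses that $(X,\tau)$ is sequential so that "not converging to $x_0$" is detected along subsequences in the usual way.

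The main obstacle I anticipate is not any single deep step but rather the bookkeeping around the hypothesis: one must be careful that $\inf \calE_\epsi < \lambda$ genuinely holds so that minimizing/quasiminimizing sequences actually enter the compact set $K$ — the cleanest route is to observe $\epsi \mapsto \calE_\epsi(x)$ is nonincreasing for each fixed $x$ with $\calG(x) < \infty$, so $\inf\calE_\epsi \le \inf \calE_{\epsi_0}$, and either this is $< \lambda$ (possibly after enlarging $\lambda$, which does no harm) or $\calE_\epsi \equiv \infty$, contradicting properness. A secondary subtlety is the careful use of \emph{sequential} topological space throughout — compactness of $K$ must be leveraged as sequential compactness on the relevant sublevels, and one must make sure the direct method and the subsequence arguments only ever extract countable subsequences, never nets; this is exactly why the sequentiality hypothesis is included, and I would flag each point where it is used.
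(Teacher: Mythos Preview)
Your proposal is correct and follows essentially the same route as the paper's proof: the $\Gamma$-$\liminf$ via lower semicontinuity of $\calF$ and $\calG$, the recovery sequence taken constant, the direct method for existence of minimizers, and the standard $\Gamma$-convergence argument for convergence of quasiminimizers. The only cosmetic difference is that the paper handles the $\liminf$ inequality by assuming without loss of generality that $\sup_\epsi \calE_\epsi(x_\epsi)<\infty$ and deducing $\calG(x)=0$, whereas you split into the two cases $\calG(x)=0$ and $\calG(x)>0$; these are equivalent, and your bookkeeping around $\inf\calE_\epsi<\lambda$ is in fact more careful than the paper's.
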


\begin{proof}
  Ad 1. Let $x_\epsi \to x$ and assume with no loss of generality that
  $\sup_{\epsi} \calE_\epsi(x_\epsi)\leq c <\infty$. In particular, $0\leq \calG(x)
  \leq \liminf_{\epsi \to 0}\epsi\calE_\epsi(x_\epsi) \leq \liminf_{\epsi \to
    0}\epsi c=0$. Then $\calE_0(x)=\calF(x)
  \leq \liminf_{\epsi\to 0}\calF(x_\epsi) \leq \liminf_{\epsi\to
    0}\calE_\epsi(x_\epsi) $.  Fix now any $ \hat x\in
  X$. As $\epsi^{-1}\calG( \hat x) \to\infty$ if $\calG( \hat x)>0$, one has that 
$\calE_\epsi( \hat x) \to \calE_0(\hat x)$. This proves the $\Gamma$-convergence
$\calE_\epsi \stackrel{\Gamma}{\to} \calE_0$.

Ad 2. The existence of a minimizer $x_\epsi$ of $\calE_\epsi$ for
$\epsi <\epsi_0$ follows
from the equicoercivity and the lower semicontinuity of the sum $\calF
+\epsi^{-1} \calG$. Any sequence $x_\epsi$ of quasiminimizers belongs to $K$ for
$\epsi$ small enough. As such, it admits a subsequence (not relabeled)
converging to $x_0$ and, for any
$x\in X$, we have that 
$\calE_0(x_0)\leq \liminf_{\epsi \to 0} \calE_\epsi(x_\epsi) =
\liminf_{\epsi \to 0} \min \calE_\epsi \leq \liminf_{\epsi \to
  0}\calE_\epsi(x)=\calE_0(x)$. In particular, $x_0$ minimizes
$\calE_0$.

Ad 3.  This follows from the uniqueness of the minimizer of
$\mathcal E_0$ and from the fact that the topology is assumed to be sequential. 
\end{proof}

\section{Brezis-Ekeland-Nayroles principle}\label{sec:BEN}

In this section, we investigate penalization \eqref{eq:2} by letting
the constraining functional  to  be of Brezis-Ekeland-Nayroles type. Let us start by
presenting a result in the case of the classical gradient flow 
with forcing $u$  
\begin{equation}
  \label{eq:gf0}
  y'+\partial \phi(y)\ni u \ \ \text{in $H$, a.e. in} \ (0,T), \ \ y(0)=y_0.
\end{equation}
As usual, the prime denotes here derivation with respect to time. The
potential $\phi:H \to (-\infty,\infty]$ is assumed to be convex,
proper, and lower semicontinuous, and we denote by $D(\phi)=\{y\in H \
: \ \phi(y)<\infty\}$ its essential domain. The symbol
$\partial \phi$ denotes the corresponding subdifferential in the sense
of convex analysis. This is  defined as
$$ \xi \in \partial \phi(y) \ \ \Leftrightarrow   \ \ y \in D(\phi) \
\ 
\text{and} \ \ ( \xi,x-y ) \leq \phi(x) - \phi(y) \ \ \forall x
\in H.$$
The initial datum $y_0$ is assumed to belong to $D(\phi)$. Given $u\in
L^2(0,T;H)$, the solution $y\in H^1(0,T;H)$ 
of \eqref{eq:gf0} exists
uniquely \cite{Brezis73}. The
celebrated result by Brezis \& Ekeland \cite{Brezis-Ekeland76,Brezis-Ekeland76b} and Nayroles \cite{Nayroles76,Nayroles76b}
implies that $y$ solves \eqref{eq:gf0} iff $G_{\rm BEN} (u,y) =0$,
where the constraining functional $G_{\rm BEN} (u,y) :
L^2(0,T;H)\times H^1(0,T;H)$ is given by
\begin{equation}G_{\rm BEN} (u,y) =
\left\{
  \begin{array}{ll}
   \disp \int_0^T \Big( \phi(y) {+} \phi^*(u{-}y'){-}( u,y)\Big) \, \dt + \frac12
    \| y(T)\|^2 - \frac12\|y_0 \|^2 & \ \text{if} \ \ y(0)=y_0\\
\infty&\quad \text{otherwise}.
  \end{array}
\right.\label{eq:BEN}
\end{equation}
 Here, $\phi^*$ denotes the conjugate to $\phi$, namely,
$\phi^*(y^*) = \sup_y( (y^*,y) - \phi(y)$. 
Note that, for all $(u,y) \in L^2(0,T;H)\times H^1(0,T;H)$ the
functions $t \mapsto \phi(y'(t))$ and $y \mapsto \phi^*(u(t){-}y'(t))$
are measurable, so that $G_{\rm BEN} (u,y)$ is well defined. Still,
$G_{\rm BEN} (u,y)$ takes the value $\infty$ if $t \mapsto
\phi(y'(t))$ or $y \mapsto \phi^*(u(t){-}y'(t))$ do not belong to
$L^1(0,T)$.

Existence results based in the Brezis-Ekeland-Nayroles principle have
been obtained by {\sc Rios} \cite{Rios76}, {\sc Auchmuty}
\cite{Auchmuty93},  {\sc Roub\'\i\v cek} \cite{Roubicek00}, and
{\sc Ghoussoub \& Tzou} \cite{Ghoussoub-Tzou04} among others. In \cite{Ghoussoub-Tzou04}, the authors recast the problem within the far-reaching
theory of (anti-)selfdual  Lagrangians \cite{Ghoussoub08}. A variety
of extensions have been proposed, including perturbations
\cite{Ghoussoub-McCann04}, long-time dynamics \cite{Lemaire96},
measure data \cite{Mabrouk00}, time discretizations \cite{be2}, second-order \cite{Mabrouk03},
doubly-nonlinear \cite{be}, monotone \cite{visintin08}, 
pseudomonotone equations and their structural compactness \cite{visintin18}, and
rate-independent flows \cite{plas}. Note however that deriving
existence via these extensions may call for more stringent assumptions
on the data of the problem.

In the following, we will assume that the set of admissible controls
$U$ is a compact subset of $L^2(0,T;H)$. Moreover, we ask the target functional 
$F : L^2(0,T;H)\times H^1(0,T;H)\to [0,\infty)$ to be lower semicontinuous
with respect to the strong $\times$ weak topology of $L^2(0,T;H)\times
 H^1(0,T;H)$. An example in this class is
\begin{equation*}
F(u,y) =\frac12 \int_0^T \| y - y_{\rm target}\|^2\dt +\frac12 \int_0^T \| y' - y_{\rm target}'\|^2\dt +
\frac12\int_0^T\| u \|^2\dt 
\end{equation*}
for some given $y_{\rm target}\in H^1(0,T;H)$.
The main result of this section is the following.

\begin{theorem}[Gradient flows, BEN principle]\label{thm:BEN}
Let $\phi:H\to (-\infty,\infty]$ be convex, proper, and lower
semicontinuous, $y_0\in D(\phi)$, $\emptyset \not = U\subset\subset L^2(0,T;H)$, $F: L^2(0,T;H)
\times H^1(0,T;H)\to  [0,\infty]$  lower semicontinuous and coercive
w.r.t. the
strong $\times$ weak topology $\tau$ of $L^2(0,T;H)\times
H^1(0,T;H)$,  $F(u,y) <\infty$  only if  $u\in U $, $G_{\rm
  BEN}$ defined as in \eqref{eq:BEN}, and $E_\epsi:=
  F+\epsi^{-1}G_{\rm BEN}$ for $\epsi>0$. 

Then, $\min
  E_\epsi$ admits a solution for all
  $\epsi>0$. Moreover, $E_\epsi \stackrel{\Gamma}{\to} E_0$ with
  respect to topology $\tau$ where $E_0 =F$ on
    $\{G_{\rm BEN}=0\}$ and $E_0=\infty$ otherwise, and any sequence of
    quasiminimizers converges, up to a subsequence, to a solution of
    $\min E_0$. In case $\min E_0$ admits a unique minimizer, any
    sequence of quasiminimizers $\tau$-converges to it.
\end{theorem}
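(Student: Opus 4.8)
The plan is to obtain all four assertions from Lemma~\ref{lem}, applied with $\calF=F$ and $\calG=G_{\rm BEN}$ on $X=L^2(0,T;H)\times H^1(0,T;H)$ endowed with $\tau$. Three ingredients are needed. First, lower semicontinuity: that of $F$ is assumed, while that of $G_{\rm BEN}$ is the core of the argument (second paragraph). Second, equicoercivity, which is immediate: since $G_{\rm BEN}\ge 0$ — this is the integrated Fenchel--Young inequality $\phi(y)+\phi^*(u-y')\ge(u-y',y)$ together with $\int_0^T(y',y)\dt=\frac12\|y(T)\|^2-\frac12\|y_0\|^2$ — one has $E_\epsi\ge F$ for every $\epsi>0$, hence $\{E_\epsi<\lambda\}\subset\{F\le\lambda\}$, and the latter set is $\tau$-compact for every $\lambda$ by the coercivity and lower semicontinuity of $F$; in particular equicoercivity holds for \emph{all} $\epsi>0$, which is why $\min E_\epsi$ is solvable for all $\epsi>0$ and not merely small ones. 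Third, properness of $E_\epsi$: fixing some $\bar u\in U$ for which the unique $H^1(0,T;H)$-solution $\bar y$ of \eqref{eq:gf0} with datum $\bar u$ satisfies $F(\bar u,\bar y)<\infty$ — i.e.\ assuming, as is natural, that the constrained problem \eqref{eq:1} has at least one admissible pair of finite cost, the statement being trivial otherwise — we get $G_{\rm BEN}(\bar u,\bar y)=0$ from the Brezis--Ekeland--Nayroles principle, whence $E_\epsi(\bar u,\bar y)=F(\bar u,\bar y)<\infty$. One last caveat: $\tau$ is not sequential on all of $X$, but it is metrizable on bounded sets, and every sequence intervening in the proof of Lemma~\ref{lem} (quasiminimizers, and any $x_\epsi\to x$ with $\sup_\epsi E_\epsi(x_\epsi)<\infty$) lies in the bounded set $\{F\le\lambda\}$; this suffices, the recovery sequences in the $\Gamma$-limsup inequality being in fact constant.

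The key point is the $\tau$-lower semicontinuity of $G_{\rm BEN}$. Let $u_n\to u$ strongly in $L^2(0,T;H)$ and $y_n\rightharpoonup y$ weakly in $H^1(0,T;H)$; we may assume $\liminf_n G_{\rm BEN}(u_n,y_n)<\infty$, so that $y_n(0)=y_0$ for all large $n$. For each $t\in[0,T]$, evaluation $v\mapsto v(t)$ is bounded and linear from $H^1(0,T;H)$ to $H$, hence weakly continuous, so $y_n(t)\rightharpoonup y(t)$; in particular $y(0)=y_0$ (so $G_{\rm BEN}(u,y)$ is given by the first branch of \eqref{eq:BEN}) and $\|y(T)\|^2\le\liminf_n\|y_n(T)\|^2$. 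The integral functionals $v\mapsto\int_0^T\phi(v)\dt$ and $w\mapsto\int_0^T\phi^*(w)\dt$ are convex and lower semicontinuous on $L^2(0,T;H)$, being integrals of normal convex integrands, hence weakly lower semicontinuous; applying them along $y_n\rightharpoonup y$ and along $u_n-y_n'\rightharpoonup u-y'$ in $L^2(0,T;H)$ respectively gives the corresponding liminf inequalities. Finally $\int_0^T(u_n,y_n)\dt\to\int_0^T(u,y)\dt$, being the pairing of a strongly and a weakly convergent sequence in $L^2(0,T;H)$. Since $\phi$ and $\phi^*$ possess affine minorants, all of these terms are bounded below along the sequences, so the $\liminf$ of their sum dominates the sum of the $\liminf$s, and we conclude $G_{\rm BEN}(u,y)\le\liminf_n G_{\rm BEN}(u_n,y_n)$.

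With these ingredients, Lemma~\ref{lem} (applied as just explained) yields at once: solvability of $\min E_\epsi$ for all $\epsi>0$; the $\Gamma$-convergence $E_\epsi\stackrel{\Gamma}{\to}E_0$ with $E_0=F$ on $\{G_{\rm BEN}=0\}$ and $E_0=\infty$ otherwise; subsequential convergence of quasiminimizers to minimizers of $E_0$; and full $\tau$-convergence when that minimizer is unique. One finally notes that, by the Brezis--Ekeland--Nayroles principle, $\{G_{\rm BEN}=0\}$ is exactly the set of pairs $(u,y)$ with $y\in S(u)$, so minimizers of $E_0$ are precisely the solutions of \eqref{eq:1}. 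The only genuinely delicate step is the lower semicontinuity of $G_{\rm BEN}$: one must combine weak lower semicontinuity of the two convex integral functionals with the bookkeeping that keeps $G_{\rm BEN}$ away from the indeterminate form $\infty-\infty$ and transfers the initial condition $y(0)=y_0$ to the limit. Everything else is the direct method.
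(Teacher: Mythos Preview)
Your proof is correct and follows essentially the same route as the paper: apply Lemma~\ref{lem} with $\calF=F$, $\calG=G_{\rm BEN}$, verify properness via a solution pair of the gradient flow, deduce equicoercivity from that of $F$ and $G_{\rm BEN}\ge 0$, and check $\tau$-lower semicontinuity of $G_{\rm BEN}$ term by term using weak lower semicontinuity of the convex integral functionals, weak continuity of the trace $y\mapsto y(T)$, and strong--weak pairing for $\int_0^T(u,y)\,\dt$. You are in fact a bit more careful than the paper on three points---the $\infty-\infty$ bookkeeping via affine minorants, the passage of the initial condition to the limit, and the caveat that $\tau$ is only sequential on bounded sets---none of which the paper spells out.
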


\begin{proof}
  In order to prove the statement we apply Lemma
  \ref{lem} with the choices $X= L^2(0,T;H)\times
  H^1(0,T;H)$ and $\tau =$ strong $\times$ weak topology in $X$. 

We start by checking that $E_\epsi$ is proper. In fact, by letting $u \in U$
and $y\in H^1(0,T;H)$ be the unique solution of $y'+\partial \phi(y)
\ni u$ with $y(0)=y_0$ we have that $E_\epsi(u,y) = F(u,y)<\infty$.

In order to prove the lower semicontinuity of $G_{\rm BEN}$, assume that $(u_n,y_n)\stackrel{\tau}{\to} (u,y)$. As 
$H^1(0,T;H)\subset
C([0,T];H)$ and $U$ is compact in $L^2(0,T;H)$ we have that  
\begin{align*}
&u_n - y_n' \to u- y' \ \ \text{weakly in} \ L^2(0,T;H), \\ 
&( u_n ,y_n ) \to ( u,y) \ \ \text{in} \ \ L^1(0,T), \\
&y_n(T)\to y(T) \ \ \text{weakly in} \ H.
\end{align*}
This implies that $G_{\rm BEN}(u,y) \leq \liminf_{n \to \infty}G_{\rm
  BEN}(u_n,y_n)$.
The equicoercivity of $E_\epsi$ follows from that of
$F$. 
\end{proof}

A remarkable feature of the penalization of  problem
\eqref{eq:1} via the Brezis-Ekeland-Nayroles functional relies in the
possibility of exploiting convexity. Indeed, in case $F$ is convex,
the penalized $F + \epsi^{-1}G_{\rm BEN}$ turns out to be separately
convex, the only nonconvexity coming from the bilinear term
$(u,y)$. This in turn suggests the possibility of implementing some
alternate minimization procedure. Note that, in
relation with applications to PDEs, the bilinear term $(u,y)$  is
usually  of lower order. 

In the statement of Theorem \ref{thm:BEN} we have assumed $F$ to be
coercive. In fact, the functional $G_{\rm BEN}$ 
itself cannot be
expected to be coercive with respect to topology $\tau$. In
particular, this would follow by asking $\phi^*$ to be
superquadratic.  This  would however induce a quadratic bound to
$\phi$, a 
quite restrictive assumption, especially in relation
to PDEs. 

An alternative possibility is that of augmenting $G_{\rm BEN}$ by a coercive
term, which would still vanish on solutions of \eqref{eq:gf0}. A proposal in this
direction is in \cite{be}, where the following variant of the
Brezis-Ekeland-Nayroles functional is presented
\begin{equation}\tilde G_{\rm BEN} (u,y) = G_{\rm BEN} (u,y) + \left(
\int_0^T\big( \|y'\|^2{-}(u,y')\big) \, \dt + \phi(y(t)) - \phi(y_0)
\right)^+\label{eq:BEN2}
\end{equation}
with $r^+:=\max \{r,0\}$. By letting now
$E_\epsi = F + \epsi^{-1}\tilde G_{\rm BEN}$ one can prove the statement of
Theorem \ref{thm:BEN} also for a noncoercive functional $F$, for
coercivity for $y$ with respect to the weak topology of $H^1(0, T ;H)$ is provided by $\tilde G_{\rm BEN}$.

Before closing this subsection, let us remark that a time-dependent
potential $\phi$ can be considered as well, namely
\begin{equation}
  \label{eq:gf0t}
  y'(t)+\partial \phi(t,y(t))\ni u(t) \ \ \text{in $H$, for a.e.} \ t \in (0,T), \ \ y(0)=y_0.
\end{equation}
Here, $\phi:(0,T)\times H \to
(-\infty,\infty]$ is asked to be measurable with respect to ${\mathcal L}\otimes
{\mathcal B}(H)$, where ${\mathcal L} $ is the Lebesgue $\sigma$-algebra in $(0,T)$ and
${\mathcal B}(H)$ is the Borel $\sigma$-algebra in $H$, and such that $y \mapsto \phi(t,y)$ is proper,
convex, and lower
semicontinuous for a.e. $t \in (0,T)$. 
Problem \eqref{eq:gf0t} can be equivalently reformulated as $G_{\rm
  BEN}(u,y)=0$ where 
$$
G_{\rm BEN} (u,y) =
\left\{
  \begin{array}{ll}
   \disp \int_0^T \Big( \phi(t,y(t)) {+} \phi^*(t,u(t){-}y(t)'){-}(
    u(t),y(t))\Big) \, \dt \\
\qquad{} +\disp\frac12
    \| y(T)\|^2 - \frac12\|y_0 \|^2 & \quad \text{if} \ \ y(0)=y_0\\
\infty&\quad \text{otherwise}.
  \end{array}
\right.
$$
where of course conjugation in $\phi^*$ is taken with respect to the second
variable only. In order to be sure, however, that pairs $(u,y)$ 
exist with  
that $G_{\rm BEN}(u,y) =0$,  some additional assumptions on the time
dependence $t \mapsto \phi(t,y)$ is required. The reader is referred
to \cite{Kenmochi77,Kenmochi91,Moreau77,Yamada76} for a collection of classical results in this direction.

\subsection{An example}
With the aim of illustrating the statement of 
Theorem \ref{thm:BEN}, we investigate the ODE optimal control
problem 
\begin{align}
&\min\Bigg\{\frac12 \int_0^1 (y(t) - \e^{-t})^2\dt +\frac12 \int_0^1t^2 (u(t) -
  \e^{-t})^2\dt \ :\\
&\quad \quad  \quad    y'(t)+y(t)=u(t)\equiv u_0 \e^{-t},  \ u_0\in
  [0,1], \ 
  y(0)=1\Bigg\}.
\label{eq:case}
\end{align}
 Here, by taking advantage of the linearity of the
constraint one can directly compute $y(t)=S(u_0 \e^{-t})(t) = \e^{-t}(1+tu_0)$
and 
\begin{align*}
&u_0\mapsto F(S(u_0 \e^{-t}), u_0 \e^{-t}) :=\frac12 \int_0^1 (S(u_0 \e^{-t})(t)- \e^{-t})^2\dt +\frac12 \int_0^1t^2 (u_0 \e^{-t} -
  \e^{-t})^2\dt \\
&\quad= \left(\frac12 \int_0^1t^2 \e^{-2t}\dt\right)\left( u_0^2 +
  (u_0-1)^2 \right)=:\gamma \left( u_0^2 +
  (u_0-1)^2 \right)
\end{align*}
In particular, the optimal control corresponds to $u_0=1/2$, the
optimal solution is $y(t) = \e^{-t}(1+t/2)$, and the minimum of $E_0$
is 
$$F(\e^{-t}(1+t/2),\e^{-t}/2)=\gamma/2 = 1/16 -5/(16\e^2)\sim 0.0202.$$

The ODE is the gradient flow of the potential
$\phi(y)=y^2/2$ under the additional forcing $u$. Correspondingly,  the Brezis-Ekeland-Nayroles
functional $G_{\rm BEN}$ is given by 
$$G_{\rm BEN}(u,y) = 
\left\{
  \begin{array}{ll}
\disp\int_0^1\left(\frac12y^2 + \frac12 (u-y')^2 -uy
\right)\dt +\frac12 y^2(1) - \frac12& \ \ \text{if} \ y(0)=1,\\
\infty&\ \ \text{otherwise}.
  \end{array}
\right.
$$
The penalized optimal control problem reads then
\begin{align*}&\min\Bigg\{\int_0^1 \left( \frac12 (y(t) - \e^{-t})^2 +\frac{t^2}{2} (u(t) -
  \e^{-t})^2 + \frac{1}{2\epsi}y^2(t) + \frac{1}{2\epsi} (u(t)-y'(t))^2
  -\frac{1}{\epsi}u(t)y(t)\right)\dt \\
&\quad \quad \quad \quad +\frac{1}{2\epsi}y^2(1) -
\frac{1}{2\epsi}\  \ : \   \ u(t)\equiv u_0 \e^{-t},  \ u_0 \in [0,1],
  \ y(0)=1\Bigg\}.
\end{align*}
For all given $u$, the Euler-Lagrange equation for $E_\epsi = F +
\epsi^{-1}G_{\rm BEN}$ in terms of $y_\epsi$ is
 \begin{align*}
   &y''(t)-y(t)-\epsi y(t) = -(2u_0+\epsi)\e^{-t}, \quad y'(1) + y(1) =
     u_0/\e.
 \end{align*}
Complemented with the initial condition $ y(0)=1$, 
these linear relations
uniquely identify a critical point $y_\epsi$ of $E_\epsi$.  In
fact, this  is 
necessarily the  unique  minimizer of the convex functional $y \mapsto E_\epsi(u,y)$ and can
be explicitly
determined in terms of $u_0$ as
$$y_{\epsi,u_0}(t) = c_{1\epsi}\e^{-\alpha_\epsi t} +
c_{2\epsi}\e^{\alpha_\epsi t} + \left(\frac{2u_0}{\epsi} + 1 \right) \e^{-t}$$
where we have used the shorthand notation
\begin{align*}
  \alpha_\epsi&:=(1+\epsi)^{1/2},\\
c_{1\epsi}&:=\left(\frac{u_0}{\e} - (1+\alpha_\epsi)\left(
            \frac{2u_0}{\epsi}+1\right)
            \right)\left((1-\alpha_\epsi)\e^{-\alpha_\epsi} -  (1+\alpha_\epsi)\e^{\alpha_\epsi} \right)^{-1},\\
c_{2\epsi} &:= - \frac{2u_0}{\epsi} - c_{1\epsi}.
\end{align*} 
The value of $E_\epsi(u_0\e^{-t} ,y_{\epsi,u_0})$  
can be explicitly
evaluated. An elementary but tedious computation gives 
\begin{align}
&E_\epsi(u_0\e^{-t} ,y_{\epsi,u_0}) 
 = \left(\frac{c_{1\epsi}^2}{2}  +\frac{c_{1\epsi}^2}{2\epsi} +\frac{\alpha_\epsi^2c_{1\epsi}^2}{2\epsi}\right)
  \frac{\e^{-2\alpha_\epsi}-1}{-2\alpha_\epsi} + \left(\frac{c_{2\epsi}^2}{2} +\frac{c_{2\epsi}^2}{2\epsi}+\frac{\alpha_\epsi^2c_{2\epsi}^2}{2\epsi}\right)
  \frac{\e^{2\alpha_\epsi}-1}{2\alpha_\epsi}
\nonumber\\
& \quad + \left( \frac{2u_0^2}{\epsi^2}+\frac{1}{2\epsi}
  \left(\frac{2u_0}{\epsi}+1 \right)^2 +
  \frac{1}{2\epsi}\left(\frac{2u_0}{\epsi}+1+u_0
  \right)^2-\frac{u_0}{\epsi}\left(\frac{2u_0}{\epsi}+1 \right)\right)
  \frac{\e^{-2}-1}{-2}
\nonumber\\
& \quad + \left(\frac{2c_{1\epsi}u_0}{\epsi}
  +\frac{c_{1\epsi}}{\epsi}\left(\frac{2u_0}{\epsi}+1 \right)
  +\frac{\alpha_\epsi c_{1\epsi}}{\epsi}\left(\frac{2u_0}{\epsi}+1+u_0
  \right) - \frac{c_{1\epsi}u_0}{\epsi}\right)
  \frac{\e^{-\alpha_\epsi-1}-1}{-\alpha_\epsi-1}
\nonumber\\
& \quad + \left(\frac{2c_{2\epsi}u_0}{\epsi}  +\frac{c_{2\epsi}}{\epsi}\left(\frac{2u_0}{\epsi}+1 \right) -\frac{\alpha_\epsi c_{2\epsi}}{\epsi}\left(\frac{2u_0}{\epsi}+1+u_0 \right) - \frac{c_{2\epsi}u_0}{\epsi}\right)
  \frac{\e^{\alpha_\epsi-1}-1}{\alpha_\epsi-1}
\nonumber\\
& \quad + \left(1+\frac1\epsi -\frac{\alpha_\epsi^2}{\epsi}\right) c_{1\epsi}c_{2\epsi}
 + \frac{1}{2\epsi}\left(c_{1\epsi} \e^{-\alpha_\epsi} +
  c_{2\epsi} \e^{\alpha_\epsi} + \left(\frac{2u_0}{\epsi}+1\right)\e^{-1}
  \right)^2 - \frac{1}{2\epsi} \nonumber\\
& \quad + \gamma (u_0-1)^2. \label{ill}
\end{align}
Different curves $u_0 \mapsto E_\epsi(u_0\e^{-t} ,y_{\epsi,u_0})$ for different
choices of $\epsi$ are depicted in Figure \ref{illustration}. We
observe that the minimizer  and the minimum approach $1/2$ and
$0.0202$, respectively,  as $\epsi\to 0$, as
expected.  
\begin{figure}[h]
  \centering
  \pgfdeclareimage[width=105mm]{BEN}{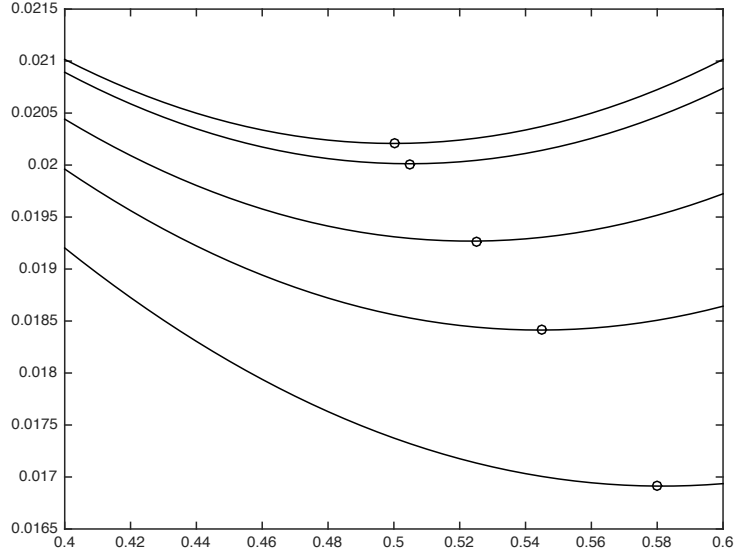}
    \pgfuseimage{BEN}
\caption{Curves $u_0 \mapsto E_\epsi(u_0\e^{-t} ,y_{\epsi,u_0})$ from
  \eqref{ill} for $\epsi=2,\, 1, \, 0.5, \, 0.1$, and $ 0 $ (bottom to top). On each curve, the dot indicates the
  minimizer.}
 \label{illustration}
\end{figure}

\subsection{Gradient flows in dual space} The statement of Theorem
\ref{thm:BEN} can be extended to the case of gradient-flow dynamics in
dual spaces. Let us 
introduce a real reflexive Banach space $W$, densely and continuously
embedded into $H$, so that $W\subset H \subset W^*$ is a classical
Gelfand triplet. We consider the problem
\begin{equation}
  \label{eq:gfff}
  y'+\partial \phi(y)\ni u \ \ \text{in $W^*$, a.e. in} \ (0,T), \ \ y(0)=y_0.
\end{equation}
The
potential $\phi:W \to \Rz$ is assumed to be  everywhere defined,  convex,
proper, and lower semicontinuous. The symbol
$\partial \phi$ in \eqref{eq:gfff} denotes now the subdifferential between $W$ and
$W^*$. This is defined as
$$ \xi \in \partial \phi(y) \ \ \Leftrightarrow   \ \
 \lan \xi,x-y \ran \leq \phi(x) - \phi(y) \ \ \forall x
\in W$$
where $\lan \cdot, \cdot \ran$ is the duality pairing between $W^*$
and $W$.  We assume $\phi$ to be bounded as follows
\begin{align*}
  &\phi(y)\geq c\|y\|_W^{ m} - \frac1c \quad \forall y \in W, \quad
  \phi^*(y^*)\geq c\|y^*\|_{W^*}^{ m'} - \frac{1}{c} \quad \forall y^* \in
  W^*\\
& \| \xi \|_{W^*}^{ m'} \leq c (1+ \| y \|_W^{ m}) \quad \forall y \in W, \, \xi
  \in \partial \phi(y)
\end{align*}  
 where $m>1$ and $m'=m/(m-1)$. 
In particular, the above bounds
entail a polynomial control on $\phi$ of the form 
$$\phi(y)\leq  c \| y\|_W^{ m}  + c \quad \forall y \in
W, \quad \phi^* (y^* )\leq  c \| y^* \|_{W^* }^{ m'}  + c  \quad \forall y^*  \in
W^*$$
 which is now compatible with PDE applications.

Given the initial datum $y_0\in W$ (recall that $D(\phi) =W$), for all $u\in
L^{ m'} (0,T;W^*)$, the solution 
$y\in  W^{1,m'}  (0,T;W^*)\cap L^{ m}(0,T,W)$
of \eqref{eq:gfff} exists
uniquely. In particular,  $y$ solves \eqref{eq:gfff} iff  $G_{\rm
  BEN} (u,y) =0$ where 
$$G_{\rm BEN} (u,y) =
\left\{
  \begin{array}{ll}
   \disp \int_0^T \big( \phi(y) + \phi^*(u-y')-\lan u,y\ran\Big) \, \dt + \frac12
    \| y(T)\|^2 - \frac12\|y_0 \|^2 & \quad \text{if} \ \ y(0)=y_0\\
\infty&\quad \text{otherwise}.
  \end{array}
\right.
$$
The result of Theorem \ref{thm:BEN} can be reformulated in this setting by assuming the set of admissible controls
$U$ to be a compact subset of $L^{ m'}(0,T;W^*)$ and 
$F : L^{ m'}(0,T;W^*)\times W^{1,m'}  (0,T;W^*)\cap L^{ m}(0,T,W)\to [0,\infty]$ to be lower semicontinuous
with respect to the strong $\times$ weak topology of $L^{
  m'}(0,T;W^*)\times W^{1,m'}  (0,T;W^*)\cap L^{
  m'}(0,T,W)$, with $F(u,y)<\infty$  only if  $u \in U$. Note that here no coercivity of $F$ is actually
  needed, for in this case $G_{\rm BEN} $ itself turns out to be coercive, due to
  the lower bounds on $\phi$ and $\phi^*$. 

Once again, $G_{\rm BEN}$ is proper, since it vanishes on solutions to
\eqref{eq:gfff}, which are known to exist.
In order to check for the
  lower semicontinuity of $G_{\rm
  BEN}$ one would need to recall the embedding $  W^{1,m'}  (0,T;W^*)\cap L^{ m}(0,T,W)\subset
C([0,T];H)$. In particular, the term $\| y(T)\|^2$ turns out to be lower semicontinuous.

\subsection{Nonpotential and nonmonotone flows}   Originally limited to gradient flows
of convex functionals, the Brezis-Ekeland-Nayroles variational approach
has been extended to classes of nonpotential monotone flows by  {\sc
  Visintin} \cite{visintin08}. By replacing Fenchel duality by the representation theory by {\sc
  Fitzpatrick} \cite{Fitzpatrick}, he noticed that solutions of the
nonpotential flow
\begin{equation}
  \label{eq:gf1}
y'+Ay\ni u \quad \text{a.e. in} \ W^*, \quad y(0)=y_0,
\end{equation}
where $A: W \to 2^{W^*}$ is a maximal monotone, coercive, and {\it representable} operator and $y_0\in
D(A)$, can be characterized by $G_{\rm BEN}(u,y)=0$, where $G_{\rm BEN}
: L^2(0,T;W^*)\times H^1(0,T;W^*)\cap L^{ 2}(0,T,W)\to [0,\infty]$ is now given as
\begin{equation}
G_{\rm BEN}(u,y)=
\left\{
  \begin{array}{ll}
   \disp \int_0^T\Big(f_A(y, u{-}y') {-} \lan u,y\ran\Big)\, \dt
    +\frac12\|y(T)\|^2 - \frac12 \| y_0\|^2& \ \ \text{if} \
                                             y(0)=y_0,\\
\infty &\ \ \text{otherwise.}
  \end{array}
\right.\label{eq:semi}
\end{equation}

The function $f_A:W\times W^*\to
(-\infty,\infty]$ is convex, lower semicontinuous, with $f_A(y,y^*)\geq
\lan y^*,y\ran$ for all $(y,y^*) \in W\times W^*$, and {\it
  represents} the operator $A$  in  the following sense
\begin{equation}
 y^* \in Ay \ \ \Leftrightarrow \ \ f_A (y,y^*)=\lan
y^*,y\ran.\label{eq:rep}
\end{equation}
An operator is said to be {\it representable} when it admits a representing
function. All maximal monotone operators are representable, for
instance via their {\it Fitzpatrick function}
$$f_A(y,y^*) := \lan y^*,y\ran + \sup\{\lan y^* - \tilde y^*, \tilde y-
y\ran \ : \ \tilde y \in W, \ \tilde y^* \in A\tilde y\}.$$
A monotone operator need however not be cyclic nor maximal to be
representable. The reader is referred to \cite{visintin17,visintin18}
for a full account on this theory. By taking advantage of position
\eqref{eq:semi}, the assertion of Theorem
\ref{thm:BEN} can hence be modified to include the case of the
differential constraint \eqref{eq:gf1} as well.

More generally, the reach of the penalization via the
Brezis-Ekeland-Nayroles functional extends even beyond monotone
situations. Assume to be given $B:H \times W \to 2^{W^*}$ such that 
\begin{align*}
  &B(h,\cdot): W  \to 2^{W^*} \ \ \text{is maximal monotone}, \
  \forall h \in H,\\[2mm]
& \forall (h,y)\in H\times W, \ \forall y^* \in B(h,y), \ \forall h_n
  \to h \ \text{in} \ H \\
&\text{there exists} \ y^*_n \ \text{such
  that} \ y^*_n \in B(h_n,y_n) \ \text{and} \ y^*_n \to y^* \
  \text{in} \ W^*.
\end{align*}
This class of nonmonotone operators $A(y) : = B(y,y)$, called {\it
  semimonotone}  \cite{visintin18}, includes the class of
{\it pseudomonotone} operators \cite{browder}, and it is representable
\cite[Thm. 4.4]{visintin18}  in the sense of
\eqref{eq:rep} by means of a
weakly lower semicontinuous albeit nonconvex function $f_A$
\begin{equation}
f_A(y,y^*) := \lan y^*,y\ran + \sup\{\lan y^* - \tilde y^*, \tilde y-
y\ran \ : \ \tilde y \in W, \ \tilde y^* \in B(y,\tilde
y)\}.\label{eq:fi}
\end{equation}
On this basis, the nonmonotone flow
\begin{equation}
  \label{eq:gf11}
y'+A(y)\ni u \quad \text{a.e. in} \ W^*, \quad y(0)=y_0,
\end{equation}
driven by the semimonotone operator $A(y)$ can be variationally
reformulated as $G_{\rm BEN}=0$, where $G_{\rm BEN}$ is defined in from \eqref{eq:semi}, where however $f_A$
is now defined by \eqref{eq:fi}. Note that $G_{\rm BEN}$ is proper and lower semicontinuous with respect to the
strong $\times$ weak topology of $L^2(0,T;W^*)\times H^1(0,T;W^*)\cap
L^{ 2}(0,T,W)$. By letting $E_\epsi = F+\epsi^{-1}G_{\rm BEN}$
and assuming again that $F$ is coercive and $F(u,y)<\infty$  only
if  $u\in U$, the results of Theorem
\ref{thm:BEN} can be extended to the case of optimal control problems
driven by \eqref{eq:gf11} as well.

\subsection{Doubly nonlinear flows}  A gradient flow  can be seen
as a particular case of the doubly nonlinear evolution
 \begin{equation}
  \label{eq:gf3}
  \partial\psi(y')+\partial \phi(y)\ni u \ \ \text{in $V^*$, a.e. in} \ (0,T), \ \ y(0)=y_0.
\end{equation}
Here, $V$ is a real reflexive Banach space with $W \subset \subset V$,
the symbol $\partial$  refers  to the subdifferential between $V$ and
$V^*$, and $\psi:V \to [0,\infty)$ is a second convex, proper, lower semicontinuous functional defined on the
whole $V$. More precisely, we assume $\psi$ to fulfill $0\in \partial
\psi(0)$  and to be of polynomial
growth, namely 
\begin{align*}
& c\|y'\|^p_V -\frac1c \leq \lan w , y' \ran, \quad \|
w\|_{V^*}^{p'} \leq c(1 + \| y'\|^p_V) \quad \forall y' \in V, \,
w \in \partial \psi(y')\\
&   
\psi^*(w)\geq c \| w\|^{p'}_{V^*} -\frac1c \quad \forall  w
\in V^*
\end{align*}
for $p >1$ and $p'=p/(p-1)$. Additionally, we assume $D(\phi)=W$ and 
the coercivity 
$$ \phi(y) \geq c \| y \|^m_W - \frac1c \quad \forall y \in W$$
for some $m>1$. In \cite{be} a doubly nonlinear version of
the Brezis-Ekeland-Nayroles functional is addressed.   In particular,
one has that $(u,y,w) \in L^{ p'}(0,T;V^*) \times
W^{1,p'}(0,T;V^*)\cap L^m(0,T;W)\times L^{ p'}(0,T;V^*)$
solve 
$$ w\in \partial \psi(y'), \quad \partial \phi(y)\ni u -w\ \ \text{a.e. in} \ (0,T), \ \ y(0)=y_0 $$
iff $G_{\rm BEN}(u,y,w)=0$, where $G_{\rm BEN}: L^{ p'}(0,T;V^*)
\times W^{1,p'}(0,T;V^*)\cap L^m(0,T;W)\times L^{ p'}(0,T;V^*)\to [0,\infty]$ is now defined as 
$$
  G_{\rm BEN}(u,y,w)=
\left\{
  \begin{array}{ll}
 \left(\disp\int_0^T\Big (\psi(y') + \psi^*(w) - \lan
    u,y'\ran\Big)\, \dt + \phi(y(T)) - \phi(y_0) \right)^+& \\
 \quad+ \disp\int_0^T \Big (\phi(y) + \phi^*(u-w) - \lan u-w, y \ran
  \Big)\, \dt& \ \text{if}
                                                            \
               y(0)=y_0\\
\infty& \ \text{otherwise}.
  \end{array}
\right.
$$
Indeed, the two nonnegative integrals in the definition of $G_{\rm
  BEN}$ correspond to the two relations $ w\in \partial \psi(y')$ and
$ \partial \phi(y)\ni u -w$, respectively. At the price of introducing
the new variable $w$, one can penalize the differential constraint
\eqref{eq:gf3} by minimizing $(u,y,w)\mapsto E_\epsi(u,y,w) =
F(u,y,w)+\epsi^{-1}G_{\rm BEN}(u,y,w)$. Again, the results of Theorem
\ref{thm:BEN} can be extended to this situation. In particular, it can be proved
that $G_{\rm BEN}$ is proper and lower semicontinuous with respect
to the strong $\times$ weak $\times$ weak topology of $L^{ p'} (0,T;V^*) \times W^{1,p'}(0,T;V^*)\cap L^m(0,T;W)\times L^{ p'} (0,T;V^*)$. Moreover, it turns out to be
coercive as well, as soon as it is restricted to $u\in U$. In particular, no coercivity
has to be assumed on $F$ in this case. Indeed, $G_{\rm
  BEN}$ is here the doubly nonlinear version of the former
\eqref{eq:BEN2}, which was in fact introduced to ensure coercivity. 

\section{De Giorgi principle}\label{sec:DG}

Let us now turn out attention to the penalization \eqref{eq:2} by means of
a variational reformulation of dissipative evolution, following the
general approach to gradient flows from
\cite{DeGiorgi80}. 

Consider again the classical gradient flow in a Hilbert space \eqref{eq:gf0}
where now the potential $\phi:H \to
(-\infty,\infty]$ is asked to be lower semicontinuous and proper, possibly being
nonconvex. To keep notation to a minimum, let us 
assume $\phi = \phi_1 + \phi_2$ with $\phi_1$ convex, proper, and lower
semicontinuous, and $\phi_2\in C^{1,1}$. Then, by letting $\partial
\phi$ denote the classical {\it Fr\'echet subdifferential}, namely
$$\xi \in \partial \phi(y) \ \ \Leftrightarrow \ \ y \in D(\phi) \
 \ \text{and} \ \ \liminf_{w \to y} \frac{\phi(w) - \phi(y) -
   (\xi,w-y)}{\| y-w\|}\geq 0,$$
(note that the Fr\'echet subdifferential coincides with the 
subdifferential of convex analysis on convex functions) we have that $\partial
\phi= \partial \phi_1 + D\phi_2$. We will additionally assume $\partial
\phi_1$ to be single-valued, whenever nonempty.  More general settings
are discussed in Subsection \ref{sec:more} below.

Solutions to \eqref{eq:gf0} correspond to $G_{\rm DG}(u,y)=0$, where
the  functional $G_{\rm DG}: L^2(0,T;H)\times H^1(0,T;H) \to
[0,\infty]$  is defined as
\begin{equation}
 G_{\rm DG}(u,y)=
\left\{
  \begin{array}{ll}
    &\disp\int_0^T \left(\disp\frac12 \| y'\|^2+ \frac12 \|\partial
\phi(y){-}u\|^2 - (u,y') \right)\dt +
\phi(y(T))- \phi(y_0) \\[2mm]
 {}& \qquad  \text{if} \ y \in D(\partial \phi) \
                        \text{a.e.} \ \text{and} \ y(0)=y_0\\[2mm]
&\infty  \quad  \text{otherwise}.
  \end{array}
\right.\label{eq:DG0}
\end{equation}
Due to its ties with the variational theory of steepest decent in
metric spaces from \cite{DeGiorgi80} we  call  $G_{\rm DG}$ {\it De
  Giorgi} functional.
In \eqref{eq:DG0} we used the notation $D(\partial \phi)$ to indicate the
essential domain of $\partial \phi$, namely $D(\partial \phi)=\{y \in
H \ : \ \partial \phi(y)\not =\emptyset\}$. Note that, by \cite[Lemma
3.4]{Rossi-Savare06}, the map $t\mapsto \partial\phi(y(t))$ is
measurable whenever $y\in H^1(0,T;H)$ with $y \in D(\partial \phi)$
a.e. The reformulation of the gradient flow \eqref{eq:gf0} via $G_{\rm
DG}$ is based on the computation of the squared residual
of \eqref{eq:gf0}, namely,
\begin{align*}
&  \int_0^T \frac12\| y' +\partial \phi(y) -u\|^2\dt = 
  \int_0^T\left(\frac12\|y'\|^2+\frac12\| \partial \phi(y) -u\|^2+ (y', \partial
  \phi(y) -u)\right)\dt \\
&\quad= G_{\rm DG}(u,y) \ \ \ \text{if} \ y(0)=y_0.
\end{align*}
The latter computation hinges on the chain rule $(\partial \phi(y),y') = (\phi
\circ y)'$, which holds in the case of $\phi=\phi_1+ \phi_2$ in the
following precise form \cite[Lemme 3.3]{Brezis73}
\begin{align}
  &y \in H^1(0,T;H), \ \ \xi \in L^2(0,T;H), \ \ \xi \in \partial \phi(y) \
  \ \text{a.e. in} \ (0,T)\nonumber\\
&  \Rightarrow \ \ \phi \circ y \in
  AC(0,T) \ \ \text{and} \ \  (\phi \circ y)' = (\xi,y')  \  \ \text{a.e. in} \ (0,T). \label{eq:chain}
\end{align}
Indeed, note that $\partial \phi(y) \in L^2(0,T;H)$ if $G_{\rm BEN}(u,y)<\infty$.
The main result of this
section is the following.

\begin{theorem}[Gradient flows, DG principle]\label{thm:DG}
Let $\phi=\phi_1+\phi_2:H\to (-\infty,\infty]$ have compact sublevels
and fulfill the chain rule \eqref{eq:chain},
with $\phi_1 $ proper, convex, and lower
semicontinuous, $\partial \phi_1$ single-valued, and $\phi_2\in
C^{1,1}$. Moreover, let 
$y_0\in D(\phi)$, $\emptyset\not =U \subset\subset L^2(0,T;H)$, $F: L^2(0,T;H)
\times H^1(0,T;H)\to [0,\infty]$ be lower semicontinuous w.r.t. the
strong $\times$ weak topology $\tau$ of $L^2(0,T;H)\times
H^1(0,T;H)$, $F(u,y)<\infty$  only if  $u\in U$, $G_{\rm
  DG}$ be defined as in \eqref{eq:DG}, and $E_\epsi:=
  F+\epsi^{-1}G_{\rm DG}$ for $\epsi>0$. 

Then, $\min
  E_\epsi$ admits a solution for all $\epsi>0$. Moreover $E_\epsi
\stackrel{\Gamma}{\to} E_0$ with respect to topology $\tau$ where $E_0 =F$ on
    $\{G_{\rm DG}=0\}$ and $E_0=\infty$ otherwise, and any sequence of
    quasiminimizers converges, up to a subsequence, to a solution of
    $\min E_0$. In case $\min E_0$ admits a unique minimizer, any sequence of quasiminimizers converge to it with
    respect to $\tau$.
\end{theorem}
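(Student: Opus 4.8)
The plan is to apply the abstract Lemma~\ref{lem} with $X = L^2(0,T;H) \times H^1(0,T;H)$ and $\tau$ the strong $\times$ weak topology, exactly as in the proof of Theorem~\ref{thm:BEN}. Three things must be verified: (i) $E_\epsi$ is proper; (ii) $G_{\rm DG}$ is $\tau$-lower semicontinuous; (iii) $E_\epsi$ is $\tau$-equicoercive for small $\epsi$. Properness is immediate: for $u \in U$ admissible and $y \in H^1(0,T;H)$ the unique solution of \eqref{eq:gf0} with datum $u$ (which exists since $\phi = \phi_1 + \phi_2$ with $\phi_1$ convex and $\phi_2 \in C^{1,1}$, so $\partial\phi$ is maximal monotone up to a Lipschitz perturbation), one has $G_{\rm DG}(u,y) = 0$ by the residual identity stated in the excerpt, hence $E_\epsi(u,y) = F(u,y) < \infty$. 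Equicoercivity follows from the hypotheses: $F$ is $\tau$-lower semicontinuous with $F(u,y) < \infty$ only if $u \in U \subset\subset L^2(0,T;H)$, while $G_{\rm DG}$ controls $\|y'\|_{L^2(0,T;H)}$ (after absorbing the cross term $\int_0^T (u,y')\,\dt$ by Young's inequality against the $\frac12\|y'\|^2$ term and using boundedness of $U$) and $\phi(y(T))$, and since $\phi$ has compact sublevels and $y(0) = y_0$ is fixed, a sublevel set $\{E_\epsi < \lambda\}$ is bounded in $H^1(0,T;H)$ with $y(t)$ ranging in a fixed compact set; bounded sets of $H^1(0,T;H)$ with endpoints in a compact set are $\tau$-relatively compact via Aubin--Lions-type arguments, so the sublevel is contained in a $\tau$-compact $K$.

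The main work is the lower semicontinuity of $G_{\rm DG}$, which is genuinely more delicate than in the BEN case because $\phi$ is nonconvex and $y \mapsto \partial\phi(y)$ is only a Fréchet subdifferential. Suppose $(u_n, y_n) \xrightarrow{\tau} (u,y)$ with $\sup_n G_{\rm DG}(u_n, y_n) \le c < \infty$; I may assume each $y_n \in D(\partial\phi)$ a.e.\ with $\partial\phi(y_n) \in L^2(0,T;H)$. From the bound, $\|\partial\phi(y_n) - u_n\|_{L^2}$ and $\|y_n'\|_{L^2}$ are bounded, hence $\xi_n := \partial\phi(y_n)$ is bounded in $L^2(0,T;H)$; pass to a subsequence with $\xi_n \rightharpoonup \xi$ weakly in $L^2(0,T;H)$. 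Since $H^1(0,T;H) \hookrightarrow C([0,T];H)$ and, using compactness of sublevels of $\phi$ together with the $H^1$ bound, one gets $y_n \to y$ strongly in $C([0,T];H)$ (or at least $L^2(0,T;H)$), I need to identify $\xi \in \partial\phi(y)$ a.e. This is the strong-weak closedness of the graph of the Fréchet subdifferential along $H^1$-bounded sequences: writing $\partial\phi = \partial\phi_1 + D\phi_2$, the term $D\phi_2(y_n) \to D\phi_2(y)$ strongly (as $\phi_2 \in C^{1,1}$ and $y_n \to y$ uniformly), so it reduces to closedness of the maximal monotone graph of $\partial\phi_1$ under strong--weak $L^2$ convergence, which is standard. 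With $\xi \in \partial\phi(y)$ a.e., weak lower semicontinuity of the convex integrands $\frac12\|y'\|^2$ and $\frac12\|\xi - u\|^2$, strong $L^1$-convergence of $(u_n, y_n') \to (u, y')$, and continuity of $\phi(y_n(T)) \to \phi(y(T))$ — here one must be careful, since $\phi$ is only lower semicontinuous, so one only gets $\phi(y(T)) \le \liminf \phi(y_n(T))$, which is the \emph{right} direction for the positive term — one concludes $G_{\rm DG}(u,y) \le \liminf_n G_{\rm DG}(u_n, y_n)$.

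A subtlety worth flagging: the chain rule \eqref{eq:chain} is what guarantees that the residual identity $\int_0^T \frac12\|y' + \partial\phi(y) - u\|^2\,\dt = G_{\rm DG}(u,y)$ holds (when $y(0) = y_0$), and it is exactly this identity, together with its nonnegativity, that gives $G_{\rm DG} \ge 0$ and $G_{\rm DG}(u,y) = 0 \iff y$ solves \eqref{eq:gf0}; I would invoke \eqref{eq:chain} precisely at the point of reconstructing $\phi(y(T)) - \phi(y_0) = \int_0^T (\xi, y')\,\dt$ in the limit. The hardest single step is the graph-closedness / lower semicontinuity of the term $\int_0^T \frac12\|\partial\phi(y) - u\|^2\,\dt$ under merely weak $H^1$ convergence of $y_n$: one cannot expect $\partial\phi(y_n) \to \partial\phi(y)$ in any strong sense, so the argument must go through the weak $L^2$ limit $\xi$ of $\xi_n$ and the monotone-graph closure, exactly as in \cite[Lemma 3.4]{Rossi-Savare06}; once $\xi \in \partial\phi(y)$ is secured, everything else is a routine application of Ioffe-type lower semicontinuity of convex integral functionals plus Lemma~\ref{lem}, conclusions 1--3 of which transfer the $\Gamma$-convergence, existence of minimizers, and convergence of quasiminimizers verbatim.
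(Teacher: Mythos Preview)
Your proof is correct and follows essentially the same route as the paper: apply Lemma~\ref{lem} in $X=L^2(0,T;H)\times H^1(0,T;H)$ with the strong~$\times$~weak topology, check properness via existence of gradient-flow solutions, obtain strong $C([0,T];H)$-convergence of $y_n$ by Ascoli--Arzel\`a using the compact sublevels of $\phi$, and pass to the limit using strong--weak closedness of $\partial\phi=\partial\phi_1+D\phi_2$. One step the paper makes explicit that you only allude to: the pointwise bound $\sup_{n,t}\phi(y_n(t))<\infty$ needed to place $y_n(t)$ in a fixed compact sublevel comes from applying the chain rule~\eqref{eq:chain} \emph{along the sequence} (this is the paper's estimate~\eqref{eq:bound}), not from the $H^1$ bound alone; also, weak sequential compactness in $H^1(0,T;H)$ is just reflexivity, so no Aubin--Lions argument is needed for equicoercivity.
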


\begin{proof} The statement follows by applying Lemma \ref{lem} in
  the space $X=L^2(0,T;H)\times H^1(0,T;H)$ endowed with its strong $\times$ weak
  topology $\tau$.

Let $u \in U$
and let $y\in H^1(0,T;H)$ be the unique solution of $y'+\partial \phi(y)
\ni u$ with $y(0)=y_0$. As we have that $E_\epsi(u,y) =
F(u,y)<\infty$, the functional $E_\epsi$ is clearly proper.

Functional $F$ is $\tau$-lower semicontinuous by
  assumption. 
In order to check the $\tau$-lower semicontinuity of $G_{\rm DG}$
  let $(u_\epsi,y_\epsi)\stackrel{\tau}{\to} (u,y)$ be given. With no loss of generality, one can assume $\sup_\epsi G_{\rm
  DG}(u_\epsi,y_\epsi)\leq c<\infty$. 
In particular, we can assume  that $y_\epsi'$ and 
$\partial \phi(u_\epsi)$ are uniformly bounded in $L^2(0,T;H)$. By means of the
chain rule \eqref{eq:chain} we obtain that for all $t \in [0,T]$
\begin{align}
&\phi(y_\epsi(t)) -\phi(y_0) = \int_0^t (\phi\circ y)'\, \dt =  \int_0^t
(\partial \phi(y),y') \dt \nonumber\\
& \quad\leq \|\partial \phi(y) \|_{L^2(0,T,H)}\|y'
\|_{L^2(0,T,H)}<\infty\label{eq:bound}
\end{align}
independently of $t\in [0,T]$ and $\epsi>0$. This implies that $t\mapsto
\phi(y_\epsi(t))$ is uniformly bounded. As the sublevels of $\phi$ are
compact, this yields that there exists $K \subset \subset
H$ such that $y_\epsi(t)\in K$ for all $t\in [0,T]$ and $\epsi>0$. The
uniform bound on  $y_\epsi'$ gives that $y_\epsi$ are equicontinuous
and the Ascoli-Arzel\`a Theorem implies that, up to not relabeled
subsequences, $y_\epsi \to y$ strongly in $C([0,T];H)$. This entails
that $\partial \phi(y_\epsi) \to \partial \phi(u)$ in
$L^2(0,T,H)$ since $\partial \phi$ is strongly $\times$ weakly closed as
subset of $L^2(0,T;H)\times L^2(0,T;H)$. Moreover, the strong
convergence of $y_\epsi$ in $C([0,T];H)$ implies that 
$y_\epsi(T)\to y(T)$ strongly in $H$, so that $\phi(y(T)) \leq \liminf_{\epsi \to
  0}\phi(y_\epsi(T))$ as $\phi$ is lower semicontinuous. Since
$(u_\epsi,y_\epsi')\to (u,y')$ strongly in $L^1(0,T)$, we can pass to lower limits in all terms in
$G_{\rm DG}(u_\epsi,y_\epsi)$ and thus check that
$G_{\rm DG}(u,y)\leq \liminf_{\epsi \to 0}G_{\rm DG}(u_\epsi,y_\epsi)$.

The $\tau$-equicoercivity of
$E_\epsi$ follows as $U$ is compact in $L^2(0,T;H)$ and $G_{\rm DG}(u,y)$ controls
the $L^2(0,T;H)$ norm of $y'$.
\end{proof}

 Before closing this subsection, let us record that in  the former case of
\eqref{eq:case} the two functionals $G_{\rm BEN}$ and $G_{\rm DG}$
coincide. In particular, Figure
\ref{illustration} illustrates the convergence of the penalization via
$G_{\rm DG}$ as well. By considering in that
same linear ODE example
$\phi(y)=\lambda y^2/2$ with $\lambda>0$ instead of $\phi(y)=y^2/2$
one finds the relation $G_{\rm BEN}(u,y)=\lambda G_{\rm
  DG}(u,y)$, which implies that  the minimizers of
$F+\epsi^{-1}G_{\rm BEN}$ and $F+(\epsi/\lambda)^{-1}G_{\rm DG}$
coincide. Hence, for fixed $\epsi>0$ one has that $G_{\rm BEN}$,
respectively $G_{\rm DG}$, delivers the
best approximation in terms of minimum and minimizer if $\lambda<1$, respectively $\lambda>1$. This
in particular proves that, in general, no functional  a priori
dominates  the other in terms
of accuracy of the approximation for fixed $\epsi$.

\subsection{A numerical simulation}

In order to present a second illustration of the penalization procedure,
let us resort to a nonlinear ODE. We consider the optimal control problem 
\begin{equation}
\min \left\{ \frac12\int_0^1(y(t)-1)^2\dt + \frac12 (u-2)^2 \ : \
  y'(t)+y^3(t)=u \ \text{for} \ t \in [0,1], \ y(0)=1
\right\}.\label{ill2}
\end{equation}
with $u  \in \Rz$. By evaluating $u \mapsto F(u,S(u))$ with Matlab,
where $y=S(u)$ is the unique solution to $y'+ y^3=u$ with $y(0)=1$,
one finds a unique optimal $u \sim 1.016$ and, correspondingly,
$F(u,S(u))\sim 0.4917$.

The De Giorgi penalized problem for
$\epsi>0$ reads  
\begin{align*}
&\min\left(F+ \epsi^{-1}G_{\rm DG} \right) = \min \Bigg\{ \frac12\int_0^1(y(t)-1)^2\dt + \frac12 (u-2)^2\\
&\quad +\frac{1}{\epsi}\left(\int_0^1\left(\frac12(y'(t))^2+\frac12(y^3(t){-}u)^2 -u y'(t)
\right) \, \dt + \frac14 y^4(1)-\frac14\right) \ : \ y(0)=1 \Bigg\}.
\end{align*}
The corresponding Euler-Lagrange equations, complemented by the
initial condition, reads
\begin{align}
  - y''(t) + 3(y^3(t)-u)y^2(t) = 0 \ \ \text{for} \ t \in (0,1), \quad
  y'(1) +y^3(1) 
  = u, \quad y(0)=1.\label{eq:el}
\end{align}
Given $u$, 
by numerically solving the latter boundary-value problem with
Matlab, one finds a critical point $y_{\epsi,u}$ of $E_\epsi$
and evaluates $u \mapsto E_\epsi(u,y_{\epsi,u})$. The results of this
simulation are illustrated in Figure \ref{illustration2}, showing
convergence of  minima and  minimizers as $\epsi \to 0$.
\begin{figure}[h]
  \centering
  \pgfdeclareimage[width=105mm]{DG}{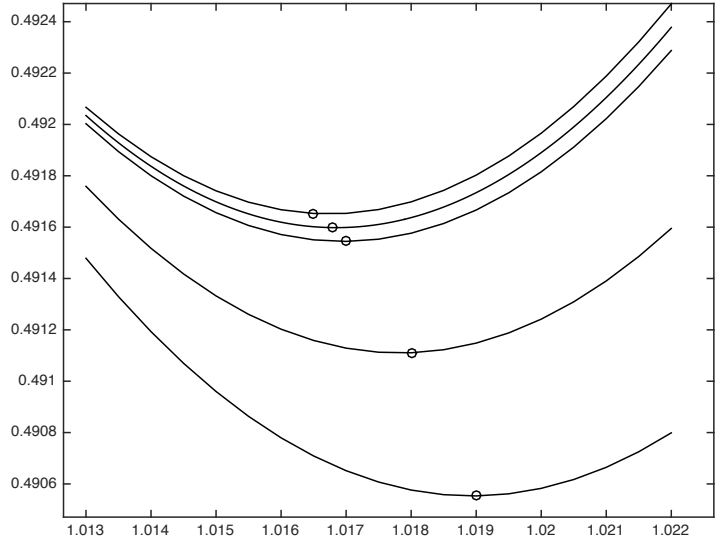}
    \pgfuseimage{DG}
\caption{Curves $u \mapsto E_\epsi(u, y_{\epsi,u})$ for problem  
  \eqref{ill2} for $\epsi=1, \, 0.5,\, 0.1,\, 0.05$, and $0$ (bottom to top). On each curve, the dot indicates the
  minimizer.}
 \label{illustration2}
\end{figure}

\subsection{More general potentials}\label{sec:more} The proof of Theorem \ref{thm:DG}
can be extended to include some more general classes of potentials. 
A first generalization of the theory allows to treat the
case of $\phi=\phi_1 + \phi_2$ with $\partial \phi_1$
not single-valued. In this case, one starts by equivalently rewriting
problem \eqref{eq:gf0} as 
\begin{equation}
  \label{eq:gf0circ}
  y' =\big( u- \partial \phi(y)\big)^\circ \quad \text{in $H$, a.e. in} \ (0,T), \ \ y(0)=y_0.
\end{equation}
Here, $\big( u- \partial \phi(y)\big)^\circ$ denotes the unique element of
minimal norm in the convex and closed set $ u- \partial
\phi(y) = u- \partial \phi_1(y) - D\phi_2(y)$. Let us briefly comment
on the
equivalence of problems \eqref{eq:gf0} and \eqref{eq:gf0circ}. On the
one hand, a solution to \eqref{eq:gf0circ} clearly solves
\eqref{eq:gf0} as well. On the other hand, solutions to \eqref{eq:gf0}
are unique: Let $y_1$ and $y_2$ be two solutions, and write 
$$y_1'-y_2' +\xi_1 - \xi_2 = D\phi_2(y_1) - D\phi_2(y_2)  \quad
\text{in $H$, a.e. in} \ \ (0,T),$$
where $\xi_i \in \partial \phi_1(y_i)$ a.e. in $(0,T)$, for
$i=1,2$. Test the latter equality by $y_1 - y_2$ and integrate on
$(0,t)$. By the monotonicity of $\partial \phi_1$
and the Lipschitz continuity of $D\phi_2$ we obtain 
$$\frac12\| y_1(t) - y_2(t)\|^2 \leq \| D^2 \phi\|_{L^\infty} \int_0^t \|
y_1(s) - y_2(s)\|^2\d s$$
and $y_1=y_2$ follows by the Gronwall Lemma.

Equality \eqref{eq:gf0circ} can then be equivalently recast as
$G_{\rm DG}(u,y)=0$ along with the choice
$$ G_{\rm DG}(u,y)=
\left\{
  \begin{array}{ll}
    &\disp\int_0^T \left(\disp\frac12 \| y'\|^2+ \frac12 \|
(\partial\phi(y)-u)^\circ\|^2 - (u,y') \right)\dt +
\phi(y(T))- \phi(y_0) \\
&{}\qquad  \text{if} \ y \in D(\partial\phi_1) \ \text{a.e.  and} \ y(0)=y_0\\
&\infty \quad  \text{otherwise}.
  \end{array}
\right.$$
Note that $G_{\rm DG}$ is proper, as it vanishes on solutions of the
gradient flow. In particular, if  $G_{\rm DG}(u,y)<\infty$
we have $y\in H^1(0,T;H)$ and we can find $\xi \in L^2(0,T;H)$
such that $\xi -u = (\partial\phi(y)-u)^\circ$ and $\xi \in \partial \phi(y)$ a.e. Then,  by means of the chain rule \eqref{eq:chain} one computes
$$(\phi\circ y)' = (\xi,y') = (\xi -u,y') + (u,y') \quad
\text{a.e. in} \ \ (0,T).$$
as well as the chain of equivalences
\begin{align*}
& y'  =  (u - \partial \phi(y))^\circ \ \ \text{a.e.} \\
&\quad \Leftrightarrow   0 = \frac12 \|y' + \xi-u\|^2 =
  \frac12\|y'\|^2+\frac12 \| \xi-u\|^2 + (\xi-u,y') \ \ \text{a.e.} \\
&\quad \Leftrightarrow   0 =
  \frac12\|y'\|^2+\frac12 \| \xi-u\|^2-(u,y') + (\phi\circ y)' \ \  \text{a.e.} \\
& \quad \Leftrightarrow G_{\rm DG}(u,y)=0.
\end{align*}
In order to extend the results of Theorem \ref{thm:DG} to this case,
one just needs to check that, by replacing the term $ \partial
\phi(y)-u$ with $( \partial
\phi(y)-u)^\circ$ in the functional, coercivity and lower semicontinuity
still hold. As for the first, one still has that $\phi$ is controlled along trajectories
as in \eqref{eq:bound}, since $( \partial
\phi(y)-u)^\circ= \xi -u $ a.e., for some $\xi \in \partial \phi(u)$
a.e. As for lower semicontinuity, one  just needs to be able to pass
to the $\liminf$ in the term containing $(\partial
\phi(y)-u)^\circ$. By letting $y_\epsi \to y$ strongly in $C([0,T];H)$
and $\eta_\epsi  =(u - \partial
\phi(y_\epsi))^\circ \to \eta$ weakly in $L^2(0,T;H)$ one finds that 
$ \xi_\epsi:=u - \eta_\epsi\in \partial \phi(y_n)$ a.e. are such that
$\xi_\epsi  
\to u - \eta =: \xi$ weakly in $L^2(0,T;H)$. Moreover, by the
strong $\times$ weak closure of $\partial \phi$ we have that $\xi
\in \partial \phi(y)$ a.e. We conclude that
\begin{align*}
&\frac12\int_0^T \| (\partial
\phi(y)-u)^\circ\|^2 \dt \leq  \frac12\int_0^T \| \xi -u\|^2 \dt 
  = \frac12 \int_0^T \| \eta\|^2 \dt  \\
& \quad\leq
\liminf_{\epsi \to 0}\frac12\int_0^T \|  \eta_\epsi  \|^2 \dt =\liminf_{\epsi \to 0}\frac12\int_0^T \| ( \partial
\phi(y_\epsi)-u)^\circ\|^2 \dt
\end{align*}
and lower semicontinuity of $G_{\rm DG}$ follows.

Even more generally, the theory  could  be adapted to potential which are
not $C^{1,1}$ perturbations of convex functions. 
The reader is referred to {\sc Rossi \& Savar\'e}
\cite{Rossi-Savare06} where a general frame for existence of
solutions to gradient flows on nonconvex functionals is addressed. In
this context, weaker notions of (sub)differential are introduced  and the
validity of a corresponding chain rule as in \eqref{eq:chain} is
discussed. In
particular, examples of operators fulfilling a suitable chain rule are
presented, including classes of dominated concave perturbations of convex
functions.

Let us mention that the validity of a chain rule {\it equality},
albeit of a paramount importance in order to relate the minimization
of $G_{\rm DG}$ to the solution of \eqref{eq:gf0}, is actually not
needed to prove Theorem \ref{thm:DG}. In fact, the chain rule
\eqref{eq:chain} has been used there just to check that the potential
$\phi$ remains uniformly bounded along trajectories. In particular, a
suitable chain-rule {\it inequality} would serve for this purpose as
well.

\subsection{Generalized gradient flows}
The De Giorgi functional approach can be adapted to encompass {\it
  generalized gradient flows}, namely relations of the form
\begin{equation}
  \label{eq:gf33}
  \partial \psi (y,y') + \partial \phi(y)\ni u \quad \text{for a.e.} \
  t\in (0,T), \quad y(0)=y_0.
\end{equation}
Here, $\psi: H \times H \to [0,\infty)$  and $\partial \psi
(y, y') $ denotes partial subdifferentiation with respect to the second
variable only.
More precisely, we assume that the map $v\in H\mapsto \psi(y,v)$
  is convex and lower
  semicontinuous for all $y\in H$, the map $
  (y,v,w)\in H\times H\times H \mapsto \psi(y,v) + \psi^*(y,w)$ is
  weakly lower semicontinuous
and
\begin{align}
\psi (y,v)+\psi^*(y,w)\geq c \|v\|^p+ c\|w\|^{p'} \quad \forall y,\,
v,\, w\in H 
 \label{eq:psi5}
\end{align}
and some $p>1$
where  $p'=p/(p-1)$ and the
Legendre-Fenchel conjugation is taken with respect to the second
variable only. An example for $\psi$ satisfying \eqref{eq:psi5} is
$\psi(y,y) = \beta(y)|y|^p$, where $p>1$ and $\beta$ is sufficiently smooth,
uniformly positive, and bounded. Note that this includes the case of
{\it doubly nonlinear} flows.  As in Theorem \ref{thm:DG}, we assume for simplicity
that $\partial\phi = \partial \phi_1 + D\phi_2$  and is single-valued.  

Solutions to \eqref{eq:gf33} can be characterized via $G_{\rm
  DG}(u,y)=0$ where $G_{\rm DG}: L^p(0,T;H)\times W^{1,q}(0,T;H) \to
[0,\infty]$  is defined as
\begin{equation}
 G_{\rm DG}(u,y)=
\left\{
  \begin{array}{ll}
   & \disp\int_0^T \left(\disp\psi(y,y') +\psi^*(y,
u{-}\partial \phi(y))- (u,y') \right)\dt +
\phi(y(T)) - \phi(y_0) \\
&\qquad  \text{if} \ y \in D(\partial \phi) \ \text{a.e and} \  y(0)=y_0\\[2mm]
&\infty \quad   \text{otherwise}.
  \end{array}
\right.\label{eq:DG}
\end{equation} 
This can be checked by equivalently rewriting
\begin{align*}
  &\partial \psi (y,y') + \partial \phi(y)\ni u \ \ \text{a.e.} \\
& \Leftrightarrow \psi(y,y')+ \psi^*(y,u-\partial \phi(y)) -
(u-\partial \phi(u),y')=0 \ \ \text{a.e.} \\
& \Leftrightarrow \psi(y,y')+ \psi^*(y,u-\partial \phi(y)) -
(u,y') + (\phi\circ y)'=0 \ \ \text{a.e.} \\
&\Leftrightarrow  G_{\rm DG}(u,y)=0.
\end{align*} 
Indeed, the last equivalence follows by integrating the second-last
relation in time, in one direction, and by realizing that the
integrand is always nonnegative, in the other direction.

By replacing $\|\cdot \|^2/2$ by $\psi(y,\cdot)$ under assumption
 \eqref{eq:psi5}, an analogous statement to  Theorem
\ref{thm:DG} holds. More precisely, by assuming $F:L^p(0,T;H)  \times
W^{1,p'}(0,T;H) \to [0,\infty)$ to be lower
semicontinuous in $X=L^p(0,T;H) \times W^{1,p'}(0,T;H)$ with respect to
the strong$\times$ weak topology and $U$ to be compact in $ L^p(0,T;H)$, one can reproduce the former
argument.  Note however that extra conditions have to be imposed
in such a way that pairs with $G_{\rm BEN}(u,y)=0$ exist.


\subsection{GENERIC flows} 
The applicability of the penalization technique via the De Giorgi
functional can be extended to classes of so-called GE\-NER\-IC flows 
(General Equations for Non-Equilibrium  Reversible-Irreversible
Coupling). These are systems of the form
\begin{equation}
  \label{eq:gf4}
  y' = L(y) \,D E(y) -K(y) (\partial \phi(y)-u)  \quad \text{for a.e.} \
  t\in (0,T), \quad y(0)=y_0.
\end{equation}
Here, $-\phi$ is to be interpreted as the {\it entropy} and will have
the property of being
nondecreasing in time. The functional $E: H \to \Rz$ represents an
{\it energy}, to be
conserved along trajectories instead. For the sake of simplicity, we assume
$E$ to be Fr\'echet differentiable, with a linearly bounded, strongly
$\times$ weakly closed
differential $DE$. The mapping $K:H \to {\mathcal L}(H)$ (linear and
continuous operators) is the so called {\it
  Onsager} operator and is asked to be continuous with symmetric and positive
semidefinite values. On the other hand, the operator $L: H \to
{\mathcal L}(H)$ is required to be continuous with
antiselfadjoint values, namely $L^*(y) = -L(y)$.

The GENERIC formalism \cite{Grmela} is a general approach to the variational formulation of physical
models and is particularly tailored to the unified
treatment of coupled conservative and dissipative dynamics. 
Potentials and
operators are related by the following structural assumptions 
\begin{equation}
  \label{eq:gen}
  L^*(y)\partial \phi (y) = K^*(y) D E(y)=0.
\end{equation}
These guarantee that solutions of \eqref{eq:gf4} are such that $(E
\circ y)'=0$ and $(-\phi\circ y)'\geq 0$, namely energy is conserved
and entropy increases along trajectories.
To date, GENERIC has been successfully applied to a variety of situations ranging from
complex fluids \cite{Grmela}, to dissipative quantum mechanics \cite{Mielke13}, to
thermomechanics \cite{generic_souza,Mielke11},
and to the Vlasov-Fokker-Planck equation~\cite{Peletier}. 

By defining the convex potential $\xi \mapsto \psi^*(y,\xi)=(K(y)\xi,\xi)/2$, so that
$K(y)=\partial \psi^*(y,\cdot)$ (subdifferential with respect to the
second variable only), problem
\eqref{eq:gf4} can be reformulated as $G_{\rm DG}(u,y)=0$ where now
$G_{\rm DG}: L^2(0,T;H) \times H^1(0,T;H) \to
[0,\infty]$ is defined as
\begin{equation}
 G_{\rm DG}(u,y)=
\left\{
  \begin{array}{ll}
    &\disp\int_0^T \Big(\disp\psi(y,y'{-}L(y)\,DE(y))  +\psi^*(y,
u{-}\partial \phi(y))\Big)\dt\\
&\quad {-} \disp\int_0^T  (u,y'{-}L(y)\, DE(y))  \, \d t   +
\phi(y(T)) - \phi(y_0) \\[4mm]
&\qquad \text{if} \ y \in D(\partial \phi) \ \text{a.e. and} \  y(0)=y_0\\[3mm]
&\infty \quad   \text{otherwise}.
  \end{array}
\right.\label{eq:DG}
\end{equation} 
In fact, we have the following chain of equivalencies
\begin{align*}
&y' = L(y)\,D E(y) -K(y) (\partial \phi(y)-u) \ \ \text{a.e.}\\
 &\Leftrightarrow\ 
  \psi(y,y'{-}L(y)\,DE(y))+\psi^*(y,u{-}\partial\phi(y)) 
	- \big( y'{-}L(y)\,DE(y), u{-} \partial \phi(y)\big) =0 \ \ \text{a.e.}\\
& \Leftrightarrow \ \psi(y,y'{-}L(y)\,D
  E(y))+\psi^*(y,u{-}\partial\phi(y)) \\
&\qquad -  (u,y'-L(y)\, DE(y))
  -(DE(y),L^*(y)\partial \phi(y)) + ( \phi\circ y)' =0 \ \  \text{a.e.}  \\
&\Leftrightarrow\
  G_{\rm DG}(u,y)=0.
\end{align*}
Again, the last equivalence follows by integration in time.
 
The statement of Theorem \ref{thm:DG} can be extended to cover the
case of GENERIC flows as well.  Let us assume from the very
beginning that  for all $u\in
H$ there exists $y$ such that $G_{\rm DG}(u,y)=0$. In applications $K$
and $\phi$ are often
degenerate (see below). Coercivity for the sole $G_{\rm DG}$ is hence
not to be expected. In order to state a general result, let us
hence assume $F$ itself to be lower semicontinuous and coercive with
respect to the strong $\times$ weak topology of $L^2(0,T;H) \times
H^1(0,T;H)$. Moreover, let $F$ be coercive with
respect to the strong $\times$ strong topology of $L^2(0,T;H)\times C([0,T];H)$ on
sublevels of $\phi$ and to
control the $L^2(0,T;H) $ norm of $\partial \phi(y)$ (alternatively,
 let 
$\partial \phi(y)$ be linearly bounded). Eventually, we ask
$\psi^*$ and
$\psi$ to be lower semicontinuous 
in the following sense
\begin{align}
&\psi(y,\eta)+\psi^*(y,\xi) \leq \liminf_{\epsi \to
  0}\big(\psi(y_\epsi,\eta_\epsi)+\psi^*(y_\epsi,\xi_\epsi)  \big)\nonumber\\
&\quad\forall y_\epsi \to y \ \ \text{strongly in} \ \ C([0,T];H) \ \
  \text{with} \ \ \sup \phi(y_\epsi(t)) <\infty\nonumber\\
&\quad \text{and}  \ \ (\eta_\epsi,\xi_\epsi) \to (\eta,\xi) \ \
\text{weakly in} \ \ L^2(0,T;H)^2.\label{eq:mosco}
\end{align}

Owing to the assumptions on $F$, in order to reproduce the argument of Theorem \ref{thm:DG} in this
setting, one is left to check the lower semicontinuity of $G_{\rm
  DG}$. Let
$(u_\epsi,y_\epsi)\to (u,y)$ strongly $\times$ weakly in $L^2(0,T;H)
\times H^1(0,T;H)$ and assume with no loss of generality that
$\partial \phi(y_\epsi)$ is bounded in $L^2(0,T;H)$. By arguing as in
\eqref{eq:bound} one can bound $t\mapsto \phi(y_\epsi(t))$ so that all
trajectories belong to a sublevel of $\phi$. From the strong coercivity
of $F$ on sublevels of $\phi$ we deduce strong compactness in
$C([0,T];H)$  for $y_\epsi$,  so that
$y_\epsi\to y$ uniformly, up to not relabeled subsequences. As $DE$ is
assumed to be strongly $\times$ weakly closed and $L$ is continuous, we have that $y_\epsi'-L(y_\epsi)\, DE(y_\epsi) \to
y'-L(y)\, DE(y)$ weakly in $L^2(0,T;H)$. On the other hand, the
strong $\times$ weak closure of $\partial \phi$ ensures that, again
without relabeling, $\partial \phi(y_\epsi) \to \partial \phi(y)$
weakly in $L^2(0,T;H)$. We can hence make use of \eqref{eq:mosco} and
deduce the lower semicontinuity of
$G_{\rm DG}$.

Before closing this discussion, let us give an example of an elementary GENERIC system
 fitting into  this abstract setting. Consider the thermalized oscillator problem
\begin{align}
 &  q'' + \nu q' + \lambda q + \theta =0,\label{eq:o1}\\
&\kappa  \theta' = \nu ( q')^2 + \theta q'. \label{eq:o2}
\end{align}
Here, $y=(q,p,\theta)\in \Rz^3=:H$ where $q$ represents the state of the
oscillator, $p$ is its momentum, and $\theta>0$ is the absolute
temperature. The nonnegative constants $\nu$, $\lambda$, and $\kappa$
are the viscosity parameter, the elastic  modulus,  and the heat capacity,
respectively. Relations \eqref{eq:o1} and \eqref{eq:o1} express the
conservation of momentum and energy, respectively.

In order to reformulate \eqref{eq:o1}-\eqref{eq:o2} as a GENERIC
system, we specify the free energy of the system as
$$\Psi(y)= \frac{\lambda}{2}q^2 + q\theta - \kappa\theta \ln
\theta.$$
Moving  from  this, the entropy $-\phi$ and the total energy $E$ are
derived by the classical Helmholtz relations as 
\begin{align*}
-\phi(y)= -\partial_\theta \Psi= -q+\kappa\ln \theta +\kappa, \quad 
E(y)=   \frac{1}{2}p^2+ \Psi+\theta \phi = \frac{1}{2}p^2 +\frac{\lambda}{2}
  q^2+ \kappa\theta.
\end{align*}
In particular, we have that 
$$DE(y) = (\lambda q, p, \kappa), \quad \partial \phi(y) = (-1,0,\kappa/\theta).$$
By defining the mappings $K$ and $L$ as
$$ K(y)=\nu \theta 
\left(
 \disp \begin{matrix}
    0 & 0 & 0 \\
0 & 1 & -p/\kappa\\
0 & -p/\kappa & p^2/\kappa^2
  \end{matrix}
\right), \quad 
L(y)=
\left(
 \disp \begin{matrix}
    0 & 1& 0 \\
-1 &0 &\disp -\theta/\kappa\\
0 &\disp \theta/\kappa& 0
  \end{matrix}
\right),
$$
we readily check that the compatibility conditions \eqref{eq:gen} hold
and that system \eqref{eq:o1}-\eqref{eq:o2} takes the form in
\eqref{eq:gf4}. By computing the conjugate we find
$$\psi^*(y,\xi) = \frac{\nu \theta}{2}(\xi_2 - p \xi_3/\kappa)^2, \quad
\psi(y,\eta) = 
\left\{
  \begin{array}{ll}
    \disp\frac{1}{2\nu \theta}\eta_2^2& \quad \text{if} \
                                   \eta_1=\eta_3+py_2/\kappa=0,\\
\infty&\quad \text{otherwise}
  \end{array}
\right.
$$
for all $y=(q,p,\theta)\in \Rz^3$ with $\theta>0$ and for all
$(\xi,\eta)\in \Rz^2$.
In particular, the lower semicontinuity \eqref{eq:mosco} follows as
$\sup \phi(y_\epsi(t))<\infty$ implies that $\theta_\epsi\geq
c>0$ for some $c$, hence $1/\theta_\epsi \to 1/\theta$ in $C([0,T])$.

In order to give a concrete example of target functional $F$ choose
\begin{align*}
F(u,y) &= 
\frac12\int_0^T |y-y_{\rm target}|^2\d t + \frac12\int_0^T |y'-y_{\rm
    target}'|^2\d t +\int_0^T |1/\theta - 1/\theta_{\rm target}|^2\d
  t\\
&+ \int_0^T |u|^2 \, \dt +  \int_0^T |u'|^2 \, \dt
\end{align*}
for some given  $y_{\rm target} = (q_{\rm target},p_{\rm target},\theta_{\rm
  target}) \in H^1(0,T;H)$ with $1/\theta_{\rm target} \in
L^2(0,T)$. The functional $F$ is coercive with
respect to the strong $\times$ weak topology of $L^2(0,T;H) \times
H^1(0,T;H)$, as well as to the strong$\times$strong topology of $L^2(0,T;H)\times C([0,T];H)$ on
sublevels of $\phi$. Moreover, it controls the $L^2(0,T;H) $ norm of
$\partial \phi(y)$. Hence, the abstract setting described above
applies.


\section*{Acknowledgement} 
This work has been funded by the Vienna Science and Technology Fund (WWTF)
through Project MA14-009 and  by the Austrian Science Fund (FWF)
projects F\,65 and I\,2375.

\bibliographystyle{alpha}

\end{document}